\newtheorem{theorem}{Theorem}[section]
\newtheorem{lemma}[theorem]{Lemma}
\theoremstyle{definition}
\numberwithin{equation}{section}
\begin{document}
\Large
\title[Linear maps preserving the dimension of fixed points ... ]{Linear maps preserving the dimension of fixed points of operators }

\author[Ali Taghavi and Roja Hosseinzadeh]{Ali Taghavi and Roja
Hosseinzadeh}

\address{{ Department of Mathematics, Faculty of Mathematical Sciences,
 University of Mazandaran, P. O. Box 47416-1468, Babolsar, Iran.}}

\email{Taghavi@umz.ac.ir,  ro.hosseinzadeh@umz.ac.ir}

\subjclass[2000]{46J10, 47B48}

\keywords{Preserver problem, Operator algebra, Fixed point.}

\begin{abstract}\large
Let $\mathcal{B(X)}$ be the algebra of all bounded linear operators on a complex Banach space $\mathcal{X}$ with $\dim \mathcal{X}\geq3$. In this paper, we characterize the forms of surjective linear maps on $\mathcal{B(X)}$ which preserve the dimension of the vector space containing of all fixed points of operators, whenever $\mathcal{X}$ is a finite dimensional Banach space. Moreover, we characterize the forms of linear maps on $\mathcal{B(X)}$ which preserve the vector space containing of all fixed points of operators.
\end{abstract} \maketitle

\section{Introduction And Statement of the Results}
\noindent
The study of maps on operator algebras
preserving certain properties is a topic which
attracts much attention of many authors (see \cite{1}--\cite{13} and the references cited there.) Some of these problems are concerned with preserving a certain property of usual products
or other products of operators (see \cite{1}--\cite{8} and \cite{12}--\cite{14}).

Let $B(X)$ denotes the algebra of all bounded linear operators on a complex Banach space $X$ with $\dim X\geq3$. Recall that $x \in X$ is a fixed point of an operator $T \in B(X)$, whenever we have $Tx=x$. Denote by $F(T)$,
the set of all fixed points of $T$. It is clear that for an linear operator $T$, $F(T)$ is a vector space. Denote by $ \dim F(T)$, the dimension of $F(T)$. In \cite{12}, authors characterized the forms of surjective maps on $\mathcal{B(X)}$ such that preserve the dimension of the vector space containing of all fixed points of products of operators. In this paper, we characterize the forms of surjective linear maps on $\mathcal{B(X)}$ which preserve the dimension of the vector space containing of all fixed points of operators, whenever $\mathcal{X}$ is a finite dimensional Banach space. Moreover, we characterize the forms of linear maps on $\mathcal{B(X)}$ which preserve the vector space containing of all fixed points of operators.
 The statements of our main results are the follows.
\begin{theorem} Let $\mathcal{X}$ be a complex Banach space with $\dim
\mathcal{X} \geq 3$. Suppose $\phi:\mathcal{B(X)}\longrightarrow \mathcal{B(X)}$ is a surjective linear map which satisfies the following condition:
$$F(A)= F( \phi(A)) \ \ \ \ \ (A\in \mathcal{B(X)}).$$
Then $ \phi (A)=A$ for all $A\in \mathcal{B(X)}$.
\end{theorem}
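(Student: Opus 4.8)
The plan is to reduce the fixed-point condition to a statement about kernels, and then to exploit rank-one operators. First I would test the hypothesis on $A = I$: since every vector is fixed by the identity, $F(I) = \mathcal X$, so $F(\phi(I)) = \mathcal X$ forces $\phi(I) - I$ to annihilate all of $\mathcal X$, i.e.\ $\phi(I) = I$. Next, writing an arbitrary operator as $A = I + B$ and using linearity together with $\phi(I) = I$, one computes $\phi(A) - I = \phi(B)$ while $A - I = B$; since $F(T) = \ker(T - I)$, the hypothesis becomes the clean equivalent condition
\[ \ker B = \ker \phi(B) \qquad (B \in \mathcal B(\mathcal X)). \]
It is this kernel-preserving condition, anchored by the normalization $\phi(I) = I$, that I would work with throughout.

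For $x \in \mathcal X$ and $f \in \mathcal X^*$, let $x \otimes f$ denote the rank-one operator $y \mapsto f(y)\,x$, whose kernel is the hyperplane $\ker f$. Applying the kernel condition, $\phi(x \otimes f)$ has kernel exactly $\ker f$; an operator with a codimension-one kernel has rank one, so $\phi(x \otimes f) = x' \otimes f$ for some $x' \in \mathcal X$ (the functional is forced to be a multiple of $f$ and can be absorbed into $x'$). For fixed $f$ this defines a linear map $T_f$ via $\phi(x \otimes f) = (T_f x) \otimes f$, and $T_f$ is injective since $\phi(x\otimes f) \neq 0$ whenever $x \neq 0$. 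The key structural step is to show $T_f$ does not depend on $f$: expanding $\phi\big(x \otimes (f+g)\big)$ in two ways for linearly independent $f, g$ and comparing ranks forces $T_f x$ and $T_g x$ to be proportional and then equal, whence $T_f = T_g$; since $\dim \mathcal X \geq 3$ any two functionals are joined through a third independent one, so a single injective linear $T$ satisfies $\phi(x \otimes f) = (Tx) \otimes f$ for all $x, f$.

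To identify $T$ I would use the normalization. For $x \neq 0$ choose $f$ with $f(x) = 1$ and apply the kernel condition to $B = I - x \otimes f$: the vector $x$ lies in $\ker B$, while $\phi(B) = \phi(I) - \phi(x \otimes f) = I - (Tx) \otimes f$, so $x \in \ker \phi(B)$ gives $x - Tx = 0$, i.e.\ $Tx = x$. Hence $T = I$ and $\phi(x \otimes f) = x \otimes f$ for every rank-one operator. Finally, to reach all of $\mathcal B(\mathcal X)$, fix an arbitrary $B$, put $C = \phi(B)$, and suppose $By_0 \neq Cy_0$ for some $y_0$. The kernel condition gives $\ker B = \ker C$, which rules out $By_0 = 0$; so with $x := By_0 \neq 0$ and $f$ chosen so that $f(y_0) = -1$, the operator $B + x \otimes f$ kills $y_0$, hence so must $\phi(B + x \otimes f) = C + x \otimes f$, yielding $Cy_0 = By_0$, a contradiction. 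Therefore $\phi(B) = B$ for all $B$.

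I expect the main obstacle to be the structural step identifying $\phi$ on rank-one operators and, in particular, proving that the associated maps $T_f$ amalgamate into one operator $T$ independent of $f$; the rank comparison has to be handled carefully, and it is here that the hypothesis $\dim \mathcal X \geq 3$ enters. By contrast, the normalization step $T = I$ and the final passage to arbitrary operators are short once the right rank-one perturbations are chosen.
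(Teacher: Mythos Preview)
Your argument is correct and self-contained. The opening reduction to the kernel condition $\ker B=\ker\phi(B)$ and the identification $\phi(I)=I$ match the paper exactly, and your conclusion that $\phi$ fixes every rank-one idempotent is the paper's Lemma~2.2. From there, however, the routes diverge. The paper does not build the operators $T_f$ or glue them into a single $T$; instead it shows, for each rank-one idempotent $P$, that $\phi(A)+P$ and $A+P$ are \emph{locally linearly dependent} (for every $x$ the vectors $(\phi(A)+P)x$ and $(A+P)x$ are dependent), and then invokes an external structural result on locally linearly dependent pairs (\v{S}emrl--Sze, cited as [9, Theorem~2.4]) to obtain a scalar $\eta(P)$ with $\phi(A)+P=\eta(P)(A+P)$; a short computation with a well-chosen rank-one idempotent $Q_1$ then forces $\eta(P)=1$.

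Your approach trades that black box for an explicit rank-one analysis: the gluing of the $T_f$'s via the rank comparison of $\phi(x\otimes(f+g))$, the normalization $T=I$ via $B=I-x\otimes f$, and the final perturbation $B+(By_0)\otimes f$ to reach arbitrary $B$. This is more elementary and entirely self-contained, and in fact never uses the surjectivity hypothesis. One small remark: the step where you invoke $\dim\mathcal X\ge 3$ to connect two functionals through a third independent one is not actually needed---for linearly independent $f,g$ your rank argument already gives $T_f x=T_g x$ directly (the equality $\mu(f+g)=f+\lambda g$ forces $\mu=\lambda=1$), while the dependent case is immediate from linearity---so your proof goes through for $\dim\mathcal X\ge 2$.
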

\begin{theorem} Let $n \geq3$. Suppose $\phi:\mathcal{M}_n\longrightarrow \mathcal{M}_n$ is a surjective linear map which satisfies the following condition:
$$ \dim F(A)= \dim F(\phi(A)) \ \ \ \ \ (A\in \mathcal{M}_n). \leqno{(2)}$$
Then there exists an invertible matrix $S\in \mathcal{M}_n$ such that $\phi(A)= SAS^{-1}$ or $\phi(A)= -SAS^{-1}$ for all $A\in \mathcal{M}_n$.
\end{theorem}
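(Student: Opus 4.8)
\noindent
The plan is to convert the fixed-point hypothesis into a rank condition and then invoke the classical theory of linear rank preservers. Since $F(A)=\ker(A-I)$, one has $\dim F(A)=n-\operatorname{rank}(A-I)$, so condition $(2)$ is equivalent to
$$\operatorname{rank}\bigl(\phi(A)-I\bigr)=\operatorname{rank}(A-I)\qquad(A\in\mathcal{M}_n).$$
The first step is to specialize this to $A=I$. The right-hand side is then $0$, so $\operatorname{rank}(\phi(I)-I)=0$; equivalently $\dim F(\phi(I))=n$, which happens only for the identity. Hence $\phi(I)=I$.

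Next I would exploit linearity. Writing an arbitrary $B\in\mathcal{M}_n$ as $B=A-I$ and using $\phi(A)=\phi(B)+\phi(I)=\phi(B)+I$, the displayed identity becomes
$$\operatorname{rank}\bigl(\phi(B)\bigr)=\operatorname{rank}(B)\qquad(B\in\mathcal{M}_n).$$
Thus $\phi$ is a (necessarily bijective) linear map on $\mathcal{M}_n$ that preserves the rank of every matrix; in particular it carries rank-one matrices to rank-one matrices. At this point I would apply the classical characterization of such maps (Marcus--Moyls): there exist invertible $P,Q\in\mathcal{M}_n$ with either $\phi(A)=PAQ$ for all $A$, or $\phi(A)=PA^{t}Q$ for all $A$. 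Imposing $\phi(I)=I$ gives $PQ=I$, i.e. $Q=P^{-1}$, so with $S:=P$ we obtain $\phi(A)=SAS^{-1}$ for all $A$ or $\phi(A)=SA^{t}S^{-1}$ for all $A$; conversely both maps visibly satisfy $(2)$ since $SA^{t}S^{-1}-I=S(A-I)^{t}S^{-1}$ has the same rank as $A-I$.

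The main obstacle is the rank-preserver classification itself. If one prefers a self-contained argument rather than quoting Marcus--Moyls, the work lies in analyzing the action of $\phi$ on rank-one matrices $xy^{t}$: rank preservation forces $\phi(xy^{t})$ to be rank one, and one must show that the two induced coordinate assignments $x\mapsto\cdot$ and $y\mapsto\cdot$ cannot mix, so that $\phi$ is globally of left-multiplier-times-right-multiplier type, possibly after a transpose. This is exactly where the hypothesis $n\ge 3$ enters: for $n=2$ the map $A\mapsto(\operatorname{tr}A)I-A$ coincides with the adjugate, is linear, fixes $I$, and satisfies $(2)$, yet is of neither admissible form, so spurious solutions appear and must be excluded. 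A final point to reconcile carefully is the precise list of surviving forms: the reduction above yields the similarity $SAS^{-1}$ together with the transpose-similarity $SA^{t}S^{-1}$, and this should be matched against the form recorded in the statement, since $-SAS^{-1}$ already fails $(2)$ at $A=I$ (its fixed-point space is $\ker(A+I)$ rather than $\ker(A-I)$).
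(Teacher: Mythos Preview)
Your reduction is correct and cleaner than the paper's route. Once you observe $\dim F(A)=n-\operatorname{rank}(A-I)$ and deduce $\phi(I)=I$, the substitution $B=A-I$ turns condition~(2) into exact rank preservation, and the Marcus--Moyls classification of linear rank preservers finishes the job. The paper proceeds differently: it shows that $\phi$ preserves rank-one idempotents (Lemma~3.2) and their orthogonality (Lemma~3.3), hence all idempotents, and then quotes a structure theorem for idempotent-preserving linear maps. Your approach is more direct and in fact does not need $n\ge 3$: the rank-preserver classification already holds for $n\ge 2$. Accordingly, your adjugate example for $n=2$ is not a counterexample in your own framework---one has $(\operatorname{tr}A)I-A=SA^{t}S^{-1}$ with $S=\left(\begin{smallmatrix}0&1\\-1&0\end{smallmatrix}\right)$---so that paragraph should be dropped. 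Your closing observation, however, is on target: the form $-SAS^{-1}$ listed in the statement cannot satisfy~(2) (it sends $I$ to $-I$, whose fixed-point space is $\{0\}$), and the second admissible form produced by either argument is $SA^{t}S^{-1}$; this appears to be a misprint in the paper's statement.
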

We recall some
notations. $X^ *$
denotes the dual space of $X$. For every nonzero $x\in \mathcal{X}$ and
$f\in \mathcal{X}^ *$, the symbol $x\otimes f$ stands for the rank one
linear operator on $\mathcal{X}$ defined by $(x\otimes f)y=f(y)x$ for any
$y\in \mathcal{X}$. Note that every rank one operator in $\mathcal{B(X)}$ can be
written in this way. The rank one operator $x\otimes f$ is
idempotent if and only if $f(x)=1$.

Let $x\otimes f$ be a rank-one operator. It is easy to check that $x\otimes f$ is an idempotent if and only if $F(x\otimes f)=\langle x\rangle$ (the linear subspace spanned by $x$). If $x\otimes f$ isn't idempotent, then $F(x\otimes f)=\{0\}$.

Given $P,Q\in \mathcal{P}$, we say that $P$ and $Q$ are orthogonal if and only if
$PQ=QP=0$.

\section{Linear maps preserving the fixed points of operators}
Assume that $\phi:\mathcal{B(X)}\longrightarrow \mathcal{B(X)}$ is a surjective linear map which satisfies the following condition:
$$F(A)= F( \phi(A)) \ \ \ \ \ (A\in \mathcal{B(X)}). \leqno{(1)}$$
 First we prove some elementary results
which are useful in the proof of Theorem 1.1.
\begin{lemma}
Let $x\in \mathcal{X}$ and $A\in \mathcal{B(X)}$. If $x$ and $Ax$ are linear independent vectors, then there exists a rank one idempotent $P$ such that $x \in F(A+P)$.
\end{lemma}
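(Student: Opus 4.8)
The plan is to produce $P$ explicitly in the form $P = y\otimes g$ and then solve for $y$ and $g$ so that both the idempotency condition and the fixed-point requirement hold simultaneously. Recall (from the discussion preceding the lemma) that $y\otimes g$ is a rank one idempotent exactly when $g(y)=1$. For such a $P$ one computes $(A+P)x = Ax + (y\otimes g)x = Ax + g(x)\,y$, so demanding $x\in F(A+P)$, i.e. $(A+P)x=x$, is equivalent to the single vector equation $g(x)\,y = x-Ax$. This reduces the whole lemma to an elementary existence question for the data $(y,g)$.

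This equation dictates the natural choice $y := x-Ax$. Since $x$ and $Ax$ are linearly independent, $y\neq 0$ (otherwise $Ax=x$ would make the pair dependent), so $y$ is a legitimate rank one datum. It then remains to find a functional $g\in\mathcal{X}^*$ with $g(x)=1$, for with this we get $g(x)\,y = y = x-Ax$ as required, while the idempotency condition $g(y)=1$ reads $g(x-Ax)=g(x)-g(Ax)=1$, which forces $g(Ax)=0$. Thus I would seek $g\in\mathcal{X}^*$ satisfying the two linear conditions $g(x)=1$ and $g(Ax)=0$ at once.

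Here the linear independence of $x$ and $Ax$ is exactly what is needed. I would first define $g$ on the two-dimensional subspace spanned by $x$ and $Ax$ by prescribing $g(x)=1$ and $g(Ax)=0$ (this is well defined precisely because $x$ and $Ax$ are independent) and then invoke the Hahn--Banach theorem to extend it to a bounded functional on all of $\mathcal{X}$. Setting $P:=(x-Ax)\otimes g$ then yields a bounded rank one operator which is idempotent, since $g(x-Ax)=1$, and which satisfies $(A+P)x=x$ by the computation above, completing the argument. The only point requiring care, the ``main obstacle'' such as it is, is the construction of $g$: one must check that the two prescriptions are consistent (guaranteed by independence) and that the extension can be taken continuous (guaranteed by Hahn--Banach), so that $P$ indeed lies in $\mathcal{B(X)}$.
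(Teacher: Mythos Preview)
Your proposal is correct and follows essentially the same approach as the paper: the paper too chooses $P=(x-Ax)\otimes f$ with $f\in\mathcal{X}^*$ satisfying $f(x)=1$ and $f(Ax)=0$, and verifies $(A+P)x=x$ directly. Your write-up is just more explicit about the motivation, the idempotency check $f(x-Ax)=1$, and the Hahn--Banach step.
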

\begin{proof}
Since $x$ and $Ax$ are linear independent vectors, there exists a linear functional $f$ such that $f(x)=1$ and $f(Ax)=0$. Set $P= (x-Ax) \otimes f$. We have
$$ (A+P)x=(A+(x-Ax) \otimes f)x=Ax+(x-Ax)f(x)=x$$
which completes the proof.
\end{proof}
\begin{lemma}
$\phi(P)=P$ for every rank one idempotent $P$.
\end{lemma}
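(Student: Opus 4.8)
The plan is to pin down $B := \phi(P)$ by determining its action on a complement of its fixed line. Write $P = x \otimes f$ with $f(x) = 1$, so that $F(P) = \langle x \rangle$. Applying hypothesis (1) gives $F(B) = \langle x\rangle$; in particular $Bx = x$. The difficulty is that this single piece of information only locates the fixed line of $B$ and says nothing about how $B$ behaves off $\langle x\rangle$ — even incorporating the rescaled operators $tP$ via (1) yields only that $tB - I$ is injective for every $t \neq 1$, which is far from $B = P$. The whole weight of the argument therefore must come from combining $P$ linearly with a second idempotent.

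Concretely, I would fix an arbitrary nonzero $y \in \ker f$. Since $f(x) = 1 \neq 0$, the vectors $x$ and $y$ are independent, so I can choose $g \in \mathcal{X}^*$ with $g(y) = 1$ and $g(x) = 0$ and set $Q = y \otimes g$, a rank one idempotent orthogonal to $P$ (indeed $PQ = f(y)\,x\otimes g = 0$ and $QP = g(x)\,y\otimes f = 0$). A direct check shows $Px = x$, $Qx = 0$ and $Py = 0$, $Qy = y$, so $x, y \in F(P+Q)$. Writing $C := \phi(Q)$, linearity gives $\phi(P+Q) = B + C$, and (1) transfers the two fixed vectors: $(B+C)x = x$ and $(B+C)y = y$.

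Now I would solve these two equations using what (1) already tells us about $B$ and $C$ individually: $Bx = x$, and (since $F(C) = F(Q) = \langle y\rangle$) $Cy = y$. The first equation becomes $x + Cx = x$, i.e. $Cx = 0$, while the second becomes $By + y = y$, i.e. $By = 0$. As $y$ ranges over all nonzero vectors of $\ker f$, I conclude that $B$ vanishes identically on $\ker f$.

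Finally, since $f(x) = 1$ we have $\mathcal{X} = \langle x\rangle \oplus \ker f$, and every $z \in \mathcal{X}$ splits as $z = f(z)x + (z - f(z)x)$ with $z - f(z)x \in \ker f$. Then $Bz = f(z)\,Bx + 0 = f(z)x = (x\otimes f)z$, so $B = x \otimes f = P$, as desired. The only genuine obstacle is the realization expressed in the first paragraph — that $P$ must be probed together with a second, orthogonal idempotent; once the pair $(P,Q)$ is in hand, every remaining step is a short computation, and the hypotheses needed (a functional $g$ with $g(x)=0$, $g(y)=1$, and $\ker f \neq \{0\}$) are comfortably met under $\dim \mathcal{X} \geq 3$.
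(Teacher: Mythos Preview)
Your proof is correct but follows a different route from the paper's. The paper first observes that $F(\phi(I)) = F(I) = \mathcal{X}$ forces $\phi(I) = I$, and then uses the identity $\ker A = F(A+I)$ together with linearity to deduce $\ker \phi(A) = \ker A$ for every $A$. Applied to $P = x\otimes f$, this immediately gives $\ker \phi(P) = \ker f$, a hyperplane, so $\phi(P)$ is automatically rank one of the form $y\otimes f$; the equality $F(y\otimes f) = F(x\otimes f) = \langle x\rangle$ then forces $y=x$. Your argument instead probes $B = \phi(P)$ directly on $\ker f$ by pairing $P$ with an orthogonal rank-one idempotent $Q = y\otimes g$ for each nonzero $y\in\ker f$, extracting $By=0$ from the fixed-point constraint on $P+Q$. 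The paper's route has the advantage of yielding kernel preservation as a reusable general fact and needs no auxiliary constructions; yours bypasses $\phi(I)=I$ altogether and makes the role of the fixed-point hypothesis on sums very explicit. Both arguments are short and valid (and yours in fact only needs $\dim\mathcal{X}\ge 2$ to supply a nonzero $y\in\ker f$).
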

\begin{proof}
Since $\mathcal{X}= F(I)=F( \phi(I))$, we obtain $\phi(I)=I$. This implies that $ \ker (A)= \ker ( \phi (A))$, for every $A\in \mathcal{B(X)}$, because
$$x \in \ker (A)  \Leftrightarrow  Ax+x=x \Leftrightarrow x \in F(A+I)= F( \phi (A)+I) $$
$$ \Leftrightarrow  x= \phi (A) x+x \Leftrightarrow x \in \ker ( \phi (A)).$$
Let $P= x \otimes f$, for some $x\in \mathcal{X}$ and $f\in \mathcal{X}^*$ such that $f(x)=1$. We have
$$ \ker f(x)= \ker x \otimes f= \ker \phi ( x \otimes f)$$
which implies that $\phi ( x \otimes f)$ is a rank one operator, because its null space is a hyperspace of $\mathcal{X}$. Thus there exists a $y\in \mathcal{X}$ such that $ \phi (P)= y \otimes f$, because its null space is equal to the null space of $f$. By assumption we have $F( x \otimes f)= F( y \otimes f)$ which implies that $x$ and $y$ are linear dependent and also $f(y)=1$. Hence $x=y$ and this completes the proof.
\end{proof}
\begin{lemma}
For any rank one idempotent $P$, there exists an $ \eta (P) \in \mathbb{C}$ such that $\phi(A)+P= \eta (P)(A+P)$, for every $A \in \mathcal{B(X)}$.
\end{lemma}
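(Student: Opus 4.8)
The plan is to fix a rank one idempotent $P=x\otimes f$ (with $f(x)=1$) and to compare the two operators $A+P$ and $\phi(A)+P$ head on. The first thing I would record is that, by linearity and Lemma 2.2, $\phi(A+P)=\phi(A)+\phi(P)=\phi(A)+P$, so that hypothesis $(1)$ specializes to $F(A+P)=F(\phi(A)+P)$ for every $A\in\mathcal{B(X)}$ and every rank one idempotent $P$. In this language the assertion becomes that $A+P$ and $\phi(A)+P$ are proportional, with a ratio $\eta(P)$ depending only on $P$.

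The core of the argument is a perturbation computation modelled on Lemma 2.1. Fix $A$, put $B=A+P$, and let $v$ be any vector for which $v$ and $Bv$ are linearly independent. Choose $g\in\mathcal{X}^*$ with $g(v)=1$ and $g(Bv)=0$, and set $Q=(v-Bv)\otimes g$; since $g(v-Bv)=1$, the operator $Q$ is a rank one idempotent and $(B+Q)v=Bv+(v-Bv)=v$, so that $v\in F(B+Q)$. Feeding $B+Q$ into the reformulated hypothesis (and using $\phi(Q)=Q$, so that $\phi(B+Q)=\phi(A)+P+Q$) gives $v\in F(\phi(A)+P+Q)$; expanding $(\phi(A)+P+Q)v=v$ and using $Qv=v-Bv$ yields $(\phi(A)+P)v=Bv=(A+P)v$. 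Hence $A+P$ and $\phi(A)+P$ agree on every $v$ for which $v$ and $(A+P)v$ are independent.

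It then remains to upgrade this agreement on a large family of directions to an identity of operators, and this is the step I expect to be the main obstacle. When $A+P$ is not a scalar, the vectors $v$ with $(A+P)v\notin\langle v\rangle$ are dense in $\mathcal{X}$ — the point being the elementary fact that an operator all of whose vectors fill out an open set of eigenvectors must be scalar — so that boundedness of $A+P$ and $\phi(A)+P$ lets a limiting argument force them to coincide everywhere; in the (possibly infinite-dimensional) Banach setting this density-and-continuity passage is the only delicate point. The excluded case $A+P=\mu I$ is immediate, for there $A=\mu I-P$ and $\phi(I)=I$ together with $\phi(P)=P$ give $\phi(A)+P=\mu I-P+P=\mu I=A+P$. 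In every case $\phi(A)+P$ and $A+P$ coincide, which is precisely the claimed relation with $\eta(P)=1$, and a fortiori the proportionality asserted in the statement; if one prefers to keep $\eta(P)$ unspecified at this stage, evaluating $\phi(A)+P=\eta(P)(A+P)$ at $A=I$ turns it into $(1-\eta(P))(I+P)=0$, whence $\eta(P)=1$ since $I+P\neq0$.
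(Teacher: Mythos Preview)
Your perturbation step is exactly the one the paper uses: from $v$ and $(A+P)v$ independent you build $Q$ via Lemma~2.1, apply Lemma~2.2 to get $\phi(Q)=Q$, and read off $(\phi(A)+P)v=(A+P)v$. The divergence is in how local information is globalized. The paper stops at the weaker conclusion that $(\phi(A)+P)x$ and $(A+P)x$ are linearly \emph{dependent} for every $x$ (i.e.\ $\phi(A)+P$ and $A+P$ are locally linearly dependent) and then quotes an external structure theorem of \v{S}emrl--Sze \cite[Theorem~2.4]{9} to produce the scalar $\eta(P)$. You instead obtain pointwise \emph{equality} on the set of non-eigenvectors of $A+P$, argue that this set is open and dense whenever $A+P$ is not scalar (any open set consisting entirely of eigenvectors forces the operator to be scalar), and close by continuity; the scalar case you handle directly from $\phi(I)=I$ and $\phi(P)=P$. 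Your route is self-contained---it avoids the outside citation---and it already yields $\eta(P)=1$, which in the paper is deferred to the separate proof of Theorem~1.1; in effect you have merged Lemma~2.3 and that proof into one argument. The only point worth tightening in a final write-up is the density claim: state and justify explicitly that if a nonempty open subset of $\mathcal{X}$ consists of eigenvectors of a bounded operator then the operator is a scalar multiple of the identity.
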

\begin{proof}
Let $P$ be a rank one idempotent. Suppose there exists an $x \in \mathcal{X}$ such that $( \phi(A)+P)x$ and $(A+P)x$ are linear independent. If $x$ and $(A+P)x$ are linear independent, by Lemma 2.1 there exists a rank one idempotent $Q$ such that $x \in F(A+P+Q)$ which by Lemma 2.2 implies that $x \in F( \phi (A)+P+Q)$. Thus $\phi (A)x=Ax$, which is a contradiction, because we assume that $( \phi(A)+P)x$ and $(A+P)x$ are linear independent. Therefore, $ \phi(A)+P$ and $A+P$ are locally linear dependent for any rank one idempotent $P$. By [9, Theorem
2.4], there is an $ \eta (P) \in \mathbb{C}$ such that $\phi(A)+P= \eta (P)(A+P)$.
\end{proof}
\par \vspace{.3cm}
\noindent \textbf{Proof of Theorem 1.1}. Let $A \in \mathcal{B(X)}$ and $P$ be a rank one idempotent. It is enough to prove that $ \eta (P)$ in Step 2 is equal to $1$. If $A+P$ is a scalar operator, then by Lemma 2.3, $ \eta (P)=1$, because $ A+P= \phi (A+P)= \phi (A)+P$. If $A+P$ is a non-scalar operator, then there exists an $x \in \mathcal{X}$ such that $x$ and $(A+P)x$ are linear independent. By Lemma 2.1 we can find a rank one idempotent $Q_1$ such that $x \in F(A+P+Q_1)$ which implies that
$$x \in F( \phi (A)+P+Q_1)= F( \eta (P)(A+P)+Q_1)$$
and so
$$ (A+P)x= \eta (P)(A+P)x.$$
Since $ (A+P)x$ is nonzero, $\eta (P)=1$ and this completes the proof.

\section{Linear maps preserving the dimension of fixed points of operators}
Let $n \geq3$. Assume that $\phi:\mathcal{M}_n\longrightarrow \mathcal{M}_n$ is a surjective linear map which satisfies the following condition:
$$ \dim F(A)= \dim F(\phi(A)) \ \ \ \ \ (A\in \mathcal{M}_n). \leqno{(2)}$$
First we prove some elementary results
which are useful in the proof of Theorem 1.12.
\begin{lemma}
$\phi (A)=I$ if and only if $A=I$.
\end{lemma}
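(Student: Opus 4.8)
The plan is to reduce both directions of the equivalence to a single extremal observation: among all operators in $\mathcal{M}_n$, the fixed-point space attains its maximal possible dimension $n$ precisely for the identity. Since the hypothesis (2) says $\dim F$ is preserved by $\phi$, the lemma should then follow immediately by transporting this extremal characterization across $\phi$.

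Concretely, the key step I would establish first is the following elementary fact about a single matrix $B \in \mathcal{M}_n$: since $F(B) = \ker(B - I)$ is a subspace of the $n$-dimensional space on which the operators act, we have $\dim F(B) \leq n$, and $\dim F(B) = n$ holds if and only if every vector is fixed by $B$, i.e. $B - I = 0$, that is $B = I$. Applying this both to $B = A$ and to $B = \phi(A)$ (which is again a matrix in $\mathcal{M}_n$) gives the two characterizations $A = I \iff \dim F(A) = n$ and $\phi(A) = I \iff \dim F(\phi(A)) = n$.

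With these in hand, I would simply chain the equivalences using (2): $A = I$ if and only if $\dim F(A) = n$, which by (2) is equivalent to $\dim F(\phi(A)) = n$, which in turn holds if and only if $\phi(A) = I$. This proves both implications at once. The notable feature of this argument is that it requires only the dimension-preserving condition (2) together with the maximal-dimension characterization of the identity; neither the linearity nor the surjectivity of $\phi$ is needed, so there is no real obstacle to overcome here, and the lemma serves mainly as a normalization to be used in the subsequent steps toward Theorem 1.2.
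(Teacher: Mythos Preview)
Your proposal is correct and follows essentially the same approach as the paper: both arguments rest on the observation that $\dim F(B)=n$ characterizes $B=I$, and then transport this across $\phi$ via condition~(2). Your write-up is in fact more explicit than the paper's (which compresses the chain into a single line and only spells out one direction), and your remark that neither linearity nor surjectivity is needed is accurate.
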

\begin{proof}
Since $n= \dim F(I)= \dim F( \phi(A))= \dim F( A)$, we obtain $F(A)= \mathcal{X}$ and so $A=I$.
\end{proof}
\begin{lemma}
$\phi$ preserves the rank one idempotents in both directions.
\end{lemma}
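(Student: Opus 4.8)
The plan is to first upgrade condition $(2)$ into a statement about the whole spectrum of $A$, and only then to read off the rank one idempotents from their spectral data. Applying $(2)$ to $I$ gives $\dim F(\phi(I))=\dim F(I)=n$, hence $\phi(I)=I$ (this is Lemma 3.1), so linearity yields $\phi(A+\lambda I)=\phi(A)+\lambda I$ for every $A\in\mathcal{M}_n$ and every $\lambda\in\mathbb{C}$. Since $F(M)=\ker(M-I)$, we have $F(A+\lambda I)=\ker\bigl(A-(1-\lambda)I\bigr)$, which is precisely the eigenspace of $A$ for the eigenvalue $1-\lambda$. Feeding $A+\lambda I$ into $(2)$ therefore shows that $A$ and $\phi(A)$ share the same geometric multiplicity at every complex number $\mu=1-\lambda$. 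Two special cases are all I will need: taking $\lambda=1$ gives $\dim\ker\phi(A)=\dim\ker A$, so $\phi$ preserves rank; taking $\lambda=0$ is $(2)$ itself, $\dim F(\phi(A))=\dim F(A)$, so $\phi$ preserves the geometric multiplicity of the eigenvalue $1$.

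Next I would apply these two facts to a rank one idempotent $P=x\otimes f$ with $f(x)=1$, for which $\operatorname{rank}P=1$ and $\dim F(P)=1$. By the previous paragraph $\operatorname{rank}\phi(P)=1$ and $\dim F(\phi(P))=1$; that is, $\phi(P)$ is a rank one operator, say $\phi(P)=y\otimes g$, having $1$ as an eigenvalue. For a rank one operator $y\otimes g$ the only possible nonzero eigenvalue is its trace $g(y)$, so the presence of the eigenvalue $1$ forces $g(y)=1$, and then $(y\otimes g)^2=g(y)\,(y\otimes g)=y\otimes g$, i.e.\ $\phi(P)$ is a rank one idempotent. The converse is entirely symmetric: if $\phi(P)$ is a rank one idempotent then $\dim\ker\phi(P)=n-1$ and $\dim F(\phi(P))=1$, whence $\dim\ker P=n-1$ (so $\operatorname{rank}P=1$) and $\dim F(P)=1$, and the same rank one computation shows $P$ is a rank one idempotent.

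The step I expect to be the crux is recognizing that rank preservation by itself is insufficient: a rank one nilpotent $y\otimes g$ with $g(y)=0$ also has rank $1$ yet fails to be idempotent. What excludes it is the complementary spectral datum, namely that $P$ carries the eigenvalue $1$, which is exactly the $\lambda=0$ instance of $(2)$. Thus the essential idea is to combine two different shifts $A+\lambda I$ in $(2)$, pinning down both the rank and the eigenvalue $1$ at once, and then to invoke the elementary fact that a rank one operator possessing the eigenvalue $1$ is automatically idempotent.
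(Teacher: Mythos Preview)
Your proof is correct and follows essentially the same route as the paper: use $\phi(I)=I$ and condition $(2)$ applied to $A+I$ (your $\lambda=1$ case, which is the paper's observation $\ker A=F(A+I)$) to get rank preservation, then $(2)$ itself (your $\lambda=0$) to force the eigenvalue $1$ on the rank one image, whence idempotency. Your general-$\lambda$ framing is a pleasant conceptual wrapper, but only the two shifts $\lambda=0,1$ are actually invoked, matching the paper's argument step for step.
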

\begin{proof}
It is easy to check that
 $$ \ker (A)= F ( A+I), \leqno{(*)}$$
 for every $A\in \mathcal{B(X)}$. Let $x\in \mathcal{X}$ and $f\in \mathcal{X}^*$. Lemma 3.1 together with $(*)$ and $(2)$ implies that
 \begin{eqnarray*}
n-1&=&\dim \ker(x \otimes f)\\
&=& \dim F(x \otimes f+I)\\
&=& \dim F( \phi (x \otimes f)+I)\\
&=&\dim \ker( \phi (x \otimes f)).
\end{eqnarray*}
Thus $\phi (x \otimes f)$ is a rank one operator. If $f(x)=1$, then we have
$$1= \dim F(x \otimes f)= \dim F( \phi(x \otimes f))$$
which implies that $\phi (x \otimes f)$ is idempotent. In a similar way can show that $\phi$ preserves the rank one idempotents in other direction and this completes the proof.
\end{proof}
\begin{lemma}
$\phi$ preserves the orthogonality of rank one idempotents in both directions.
\end{lemma}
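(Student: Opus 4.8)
The plan is to reduce orthogonality of two rank one idempotents to a statement purely about the dimension of a fixed point space, so that condition $(2)$ together with the linearity of $\phi$ applies directly. The key fact I would isolate is the following characterization: for rank one idempotents $P$ and $Q$, the operators $P$ and $Q$ are orthogonal if and only if $\dim F(P+Q)=2$. Granting this, the lemma is immediate. Indeed, by linearity $\phi(P+Q)=\phi(P)+\phi(Q)$, and by Lemma 3.2 both $\phi(P)$ and $\phi(Q)$ are rank one idempotents; condition $(2)$ then gives $\dim F(P+Q)=\dim F(\phi(P)+\phi(Q))$. Hence $P$ and $Q$ are orthogonal iff $\dim F(P+Q)=2$ iff $\dim F(\phi(P)+\phi(Q))=2$ iff $\phi(P)$ and $\phi(Q)$ are orthogonal, which yields preservation in both directions at once.

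To prove the characterization I would write $P=x\otimes f$ and $Q=y\otimes g$ with $f(x)=g(y)=1$ and compute $F(P+Q)$ directly. A vector $z$ is fixed exactly when $(P+Q)z=f(z)x+g(z)y=z$, so every fixed point lies in the span $\langle x,y\rangle$. I would then split into two cases. If $x$ and $y$ are linearly dependent, then $P+Q=x\otimes(f+\mu g)$ for the appropriate scalar $\mu$, and a short computation using $g(y)=1$ shows the range multiplier equals $2\neq 1$; hence $P+Q$ is a non-idempotent rank one operator and $\dim F(P+Q)=0$. If $x$ and $y$ are independent, writing a candidate fixed point as $z=ax+by$ reduces the equation to the two scalar conditions $b\,f(y)=0$ and $a\,g(x)=0$. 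The solution space for $(a,b)$ has dimension $2$ precisely when $f(y)=g(x)=0$, and otherwise has dimension $0$ or $1$.

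Finally I would observe that the conditions $f(y)=0$ and $g(x)=0$ are exactly $PQ=QP=0$, i.e.\ orthogonality, and that orthogonality forces $x,y$ to be independent (if $y$ were a scalar multiple of $x$, then $g(x)=0$ would contradict $g(y)=1$). Combining the cases, $\dim F(P+Q)=2$ holds if and only if $P$ and $Q$ are orthogonal, establishing the claim. The main obstacle is precisely this dimension computation: one must carry out the case analysis carefully, especially the dependent-range subcase, where the point is to notice that $P+Q$ remains rank one with range multiplier $2$, so that the value $2$ for $\dim F$ is attained only in the orthogonal situation.
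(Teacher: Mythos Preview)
Your proof is correct. Both the paper and you hinge on the identity $\dim F(P+Q)=2$ for orthogonal rank one idempotents $P,Q$, but you take a more direct route: you establish the full equivalence ``$P,Q$ orthogonal $\Leftrightarrow$ $\dim F(P+Q)=2$'' by an explicit computation of $F(P+Q)$ in the dependent and independent range cases, and then the lemma follows in both directions at once from condition $(2)$ and Lemma~3.2. The paper instead argues only one direction and invokes an auxiliary inequality $\mathrm{rank}(A)\ge\dim F(A)$: from $\dim F(\phi(P)+\phi(Q))=2$ it gets $\mathrm{rank}(\phi(P)+\phi(Q))\ge 2$, combines this with the obvious bound $\mathrm{rank}\le 2$, and then asserts that rank exactly $2$ forces orthogonality. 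That last assertion is not true from rank alone (two non-orthogonal rank one idempotents with independent ranges already have sum of rank $2$); one still needs $\dim F=2$ together with essentially your independent-case computation to finish. So your argument is both more self-contained and fills in a step the paper leaves implicit, at the cost of having to handle the dependent-range subcase separately.
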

\begin{proof}
It is easy to check that
$$ \mathrm{rank}(A) \geq \dim F(A) \leqno{(**)}$$
for every $A \in \mathcal{M}_n $. Let $P$ and $Q$ be two orthogonal idempotents. This together with $(2)$ implies that
 $$2= \dim F(P+Q)=  \dim F( \phi (P) + \phi (Q)).$$
 So by $(**)$ we obtain $ \mathrm{rank}( \phi (P) + \phi (Q)) \geq 2$.
 On the other hand, since $\phi (P)$ and $\phi (Q)$ are rank one idempotents, $ \mathrm{rank}( \phi (P) + \phi (Q)) \leq 2$. Thus we obtain $ \mathrm{rank}( \phi (P) + \phi (Q))=2$ and this yields the orthogonality of $\phi (P)$ and $\phi (Q)$. In a similar way can show that $\phi$ preserves the orthogonality of rank one idempotents in other direction and this completes the proof.
\end{proof}
\par \vspace{.3cm}
\noindent \textbf{Proof of Theorem 1.2}. By Lemmas 3.2 and 3.3, it is clear that $\phi$ preserves idempotent operators in both directions. So assertion follows from [11, Theorem 2.2].
\par \vspace{.3cm} In the end of this paper, we pose the following problem.
\par \vspace{.3cm}
\noindent \textbf{Problem}. In this paper, we discuss the linear maps on matrix algebra preserving the dimension of fixed points of elements. One natural problem is how one should characterize linear maps on $\mathcal{B(X)}$ preserving the dimension of fixed points of elements, where $\mathcal{X}$ is infinite dimensional Banach space.
\par \vspace{.4cm}{\bf Acknowledgements:} This research is partially
supported by the Research Center in Algebraic Hyperstructures and
Fuzzy Mathematics, University of Mazandaran, Babolsar, Iran.
\bibliographystyle{amsplain}

\begin{thebibliography}{10}\large

\bibitem{1} M.A. Chebotar, W.-F. Ke, P.-H. Lee, N.-C. Wong,
{\it Mappings preserving zero products}, Studia Math. 155
(2003) 77-94.

\bibitem{2}
M. Dobovi\v{s}ek, B. Kuzma, G. Le\v{s}njak, C. K. Li, T. Petek,
{\it Mappings that preserve pairs of operators with zero triple
Jordan Product }, Linear Algebra Appl. 426 (2007) 255-279.

\bibitem{3}
L. Fang, G. Ji,
{\it Linear maps preserving products of positive or Hermitian
matrices}, Linear Algebra Appl. 419 (2006) 601-611.

\bibitem{4}
L. Fang, G. Ji, Y. Pang,
{\it Maps preserving the idempotency of products of
operators}, Linear Algebra Appl. 426 (2007) 40-52.

\bibitem{5} J. Hou, Q. Di,
{\it Maps preserving numerical ranges of operator products},
Proc. Amer. Math. Soc. 134 (2006) 1435-1446.

\bibitem{6}
J. Hou, L. Zhao,
{\it Zero-product preserving additive maps on symmetric
operator spaces and self-adjoint operator spaces}, Linear Algebra
Appl. 399 (2005) 235-244.

\bibitem{7}
G. Ji, Y. Gao,
{\it Maps preserving operator pairs whose products are projections}, Linear Algebra
Appl. 433 (2010) 1348-1364.

\bibitem{8}
C. K. Li, P. \v{S}emrl, N. S. Sze,
{\it Maps preserving the nilpotency of products of operators},
Linear Algebra Appl. 424 (2007) 222-239.

\bibitem{9}
P. \v{S}emrl, N. S. Sze,
{\it Non-linear commutativity preserving maps},
Acta Scince Math. (Szeged) 71(2005) 781-819.

\bibitem{10}  A. Taghavi,
{\it Additive mappings on $C^*$-algebras Preserving absolute values}, Linear and Multilinear Algebra, Vol. 60, No. 1, (2012) 33-38.

\bibitem{11}  A. Taghavi, R. Hosseinzadeh,
{\it Linear maps preserving idempotent operators}, Bull. Korean Math. Soc. 47 (2010) 787-792.

\bibitem{12}  A. Taghavi, R. Hosseinzadeh,
{\it Maps preserving the dimension of fixed points of products of operators}, Linear and Multilinear Algebra, Accepted.

\bibitem{13} M. Wang, L. Fang, G. Ji, Y. Pang, {\it Linear maps preserving
idempotency of products or triple Jordan products of operators},
Linear Algebra Appl. 429 (2008) 181-189.

\bibitem{14} L. Zhao, J. Hou, {\it Jordan zero-product preserving additive maps on operator algebras},
J. Math. Anal. Appl. 314 (2006) 689-700.


\end{thebibliography}

\end{document}